\documentclass[11pt,a4paper]{amsart}

\usepackage[T1]{fontenc}
\usepackage[utf8]{inputenc}
\usepackage[english]{babel}
\usepackage{amsmath,amssymb,mathtools}
\usepackage[a4paper,margin=2.7cm]{geometry}
\usepackage{enumitem}
\usepackage{microtype}
\usepackage[colorlinks=true,
            linkcolor=blue,
            citecolor=blue,
            urlcolor=blue,
            linktoc=all]{hyperref}

\newtheorem{theorem}{Theorem}[section]
\newtheorem{lemma}[theorem]{Lemma}
\newtheorem{corollary}[theorem]{Corollary}
\theoremstyle{definition}

\theoremstyle{remark}
\newtheorem{remark}[theorem]{Remark}

\newcommand{\cl}[1]{\overline{#1}}
\newcommand{\cU}{\mathcal U}
\newcommand{\cV}{\mathcal V}
\newcommand{\cC}{\mathcal C}
\newcommand{\sqL}{\operatorname{sqL}}

\title[Cardinality Bounds for Hausdorff SDL Spaces]
{Cardinality Bounds for Hausdorff SDL Spaces}
\author{Gabriel Fernandes}
\address{Institute of Mathematics and Computer Sciences, University of
São Paulo (ICMC--USP), Avenida Trabalhador São-carlense, 400, Centro,
São Carlos, SP 13566-590, Brazil}
\email{fernandes@icmc.usp.br}
\author{João Marcelo Maciel Messias}
\address{Institute of Mathematics and Statistics, University of São
Paulo (IME--USP), Rua do Matão, 1010, Cidade Universitária, 05508-090
São Paulo, SP, Brazil}
\email{jommessias@usp.br}
\date{\today}
\subjclass[2020]{54A25, 54D10, 54D20}
\keywords{Cardinality bounds, Hausdorff pseudocharacter, tightness,
SDL spaces, strict quasi--Lindelöf number, strongly
cellular--Lindelöf spaces}

\begin{document}

\begin{abstract}
We establish the cardinal inequality
\(|X|\leq 2^{t(X)H\psi(X)}\) for every Hausdorff SDL space \(X\), where
\(t(X)\) and \(H\psi(X)\) denote the tightness and the Hausdorff
pseudocharacter of \(X\), respectively. Since both invariants are
bounded by \(\chi(X)\), this yields \(|X|\leq 2^{\chi(X)}\). As a
consequence, every first-countable Hausdorff strongly
cellular--Lindelöf space has cardinality at most the continuum. These
results answer Questions~2.1 and~2.2 of Bella and Spadaro. An
intermediate result is a uniform bounded-decomposition property for
SDL spaces; in particular, their strict quasi--Lindelöf number
satisfies \(\sqL(X)\leq t(X)\). 
\end{abstract}

\maketitle

\section{Introduction}

Bella and Spadaro~\cite{BellaSpadaroSDL} call a space \(X\) an
\emph{SDL space} if the closure of every strongly discrete subset of
\(X\) is Lindelöf. They proved, among other results, that every
Urysohn SDL space \(X\) satisfies
\[
 |X|\leq 2^{\chi(X)}.
\]
They then asked whether the Urysohn assumption can be weakened to the
Hausdorff separation axiom and whether, as a related consequence,
every first-countable Hausdorff strongly cellular--Lindelöf space has
cardinality at most the continuum; see
\cite[Questions 2.1 and 2.2]{BellaSpadaroSDL}.

Bounded decompositions of open covers have appeared before in this
context. Arhangel'ski\u{\i} introduced strictly quasi--Lindelöf spaces
and proved that every first-countable Hausdorff space with this
property has cardinality at most the continuum
\cite[Corollary 22]{ArhangelskiiGeneric}. Bella and Spadaro later
introduced the strict quasi--Lindelöf number \(\sqL(X)\) and asked
whether every Hausdorff space satisfies
\( |X|\leq 2^{\sqL(X)\chi(X)} \), even in the strictly
quasi--Lindelöf case; see
\cite[Questions 3.3 and 3.4]{BellaSpadaroCommon}. Their
\cite[Lemma 11]{BellaSpadaroCommon} also isolates the corresponding
bounded-decomposition property for initially \(\kappa\)-compact
spaces. In related work, Bella proved that first-countable Hausdorff
strongly cellular--Lindelöf spaces are weakly Lindelöf with respect to
closed sets, but noted that the argument did not establish strict
quasi--Lindelöfness even for cellular-compact spaces
\cite[Section 2]{BellaCellularCompact}.

We answer the two questions about SDL spaces affirmatively. In fact,
we replace the character in the exponent by the product of the
tightness and the Hausdorff pseudocharacter:
\[
 |X|\leq 2^{t(X)H\psi(X)}.
\]
The key new step is a covering lemma showing that, whenever
\(\kappa\geq t(X)\), every SDL space satisfies the relevant covering
property for decompositions into at most \(\kappa\) pieces. At
\(\kappa=t(X)\), this gives \(\sqL(X)\leq t(X)\); the uniform statement
for all \(\kappa\geq t(X)\) is what is used in the cardinality proof.

Bella, Carlson, and Spadaro proved
\( |X|\leq 2^{pwL_c(X)H\psi(X)} \) for Hausdorff spaces
\cite{BellaCarlsonSpadaroHpsi}. After the covering lemma is
established, our proof adapts their Hausdorff-pseudocharacter operator
and elementary-submodel argument. The unrestricted invariant
\(pwL_c(X)\) is replaced by the bounded-decomposition property at
\(\kappa=t(X)H\psi(X)\).

\section{Preliminaries}

All cardinal invariants in this paper are taken to be infinite
cardinals. We use the standard notation for cardinal functions in
topology; see \cite{Engelking,Juhasz}. The cardinal functions needed
below are defined explicitly as follows.

For \(x\in X\), the \emph{tightness at \(x\)}, denoted by \(t(x,X)\),
is the least infinite cardinal \(\kappa\) such that, whenever
\(A\subseteq X\) and \(x\in\cl A\), there is a set \(B\subseteq A\),
with \(|B|\leq\kappa\), such that \(x\in\cl B\). The tightness of the
space is \(t(X)=\sup\{t(x,X):x\in X\}\).

For \(x\in X\), the \emph{character} \(\chi(x,X)\) is the least infinite
cardinal \(\kappa\) for which \(x\) has a local base of cardinality at
most \(\kappa\). The character of the space is
\(\chi(X)=\sup\{\chi(x,X):x\in X\}\).

For \(x\in X\), the \emph{pseudocharacter} \(\psi(x,X)\) is the least
infinite cardinal \(\kappa\) for which there is a family \(\mathcal U_x\)
of open neighborhoods of \(x\), with
\(|\mathcal U_x|\leq\kappa\), such that
\(\{x\}=\bigcap\mathcal U_x\). The pseudocharacter of \(X\) is
\(\psi(X)=\sup\{\psi(x,X):x\in X\}\).
For comparison, the \emph{closed pseudocharacter} \(\psi_c(x,X)\) is
defined in the same way, with
\(\{x\}=\bigcap\{\cl U:U\in\mathcal U_x\}\), and
\(\psi_c(X)=\sup\{\psi_c(x,X):x\in X\}\).

We write \(\mathfrak c=2^\omega\) for the cardinality of the continuum.
Unless explicitly stated otherwise, closures are taken in the ambient
space. As usual, \([A]^{\leq\kappa}\) denotes the family of all subsets
of \(A\) of cardinality at most \(\kappa\).

A subset \(D\) of a space \(X\) is \emph{strongly discrete} if it has
a pairwise disjoint open expansion; that is, there is a family
\(\{W_d:d\in D\}\) of pairwise disjoint open subsets of \(X\) such
that \(d\in W_d\) for every \(d\in D\). The space \(X\) is an
\emph{SDL space} if \(\cl D\) is Lindelöf for every strongly discrete
subset \(D\) of \(X\)~\cite{BellaSpadaroSDL}.

A family of nonempty open sets is \emph{cellular} if its members are
pairwise disjoint. A space \(X\) is \emph{strongly
cellular--Lindelöf} if, for every cellular family \(\cU\) in \(X\),
there is a closed Lindelöf subspace \(L\) of \(X\) meeting every member
of \(\cU\)~\cite{BellaSpadaroSDL}.

A family \(\mathcal P_x\) of nonempty open subsets of \(X\) is a
\emph{local \(\pi\)-base at \(x\)} if every open neighborhood of
\(x\) contains a member of \(\mathcal P_x\). It is a
\emph{disjoint local \(\pi\)-base} if its members are pairwise
disjoint. Notice that members of a local \(\pi\)-base at \(x\) need
not contain \(x\).

Let \(X\) be Hausdorff. The \emph{Hausdorff pseudocharacter}
\(H\psi(X)\) is the least infinite cardinal \(\kappa\) for which one
can assign to each \(x\in X\) a family \(\mathcal N_x\) of open
neighborhoods of \(x\), with \(|\mathcal N_x|\leq\kappa\), such that
whenever \(x\neq y\), there are \(U\in\mathcal N_x\) and
\(V\in\mathcal N_y\) satisfying \(U\cap V=\varnothing\); see
\cite{BellaCarlsonSpadaroHpsi,Hodel}. By adjoining \(X\) and taking
finite intersections, without increasing the cardinalities, we may
and shall assume that \(X\in\mathcal N_x\) and that every
\(\mathcal N_x\) is closed under finite intersections.

Thus, for every Hausdorff space,
\[
 \psi(X)\leq\psi_c(X)\leq H\psi(X)\leq\chi(X).
\]

For comparison with the covering lemma below, recall the
\emph{piecewise weak Lindelöf degree for closed sets}, denoted by
\(pwL_c(X)\), introduced in
\cite[Definition 5]{BellaSpadaroCommon}. It is the least infinite
cardinal \(\kappa\) such that,
for every closed set \(F\subseteq X\), every open cover \(\mathcal U\)
of \(F\), and every decomposition
\(\mathcal U=\bigcup_{i\in I}\mathcal U_i\),
there are families
\(\mathcal V_i\in[\mathcal U_i]^{\leq\kappa}\), for \(i\in I\),
satisfying
\[
 F\subseteq\bigcup_{i\in I}\cl{\bigcup\mathcal V_i}.
\]
No restriction is imposed on the size of the index set \(I\) in the
definition of \(pwL_c(X)\).

The \emph{strict quasi--Lindelöf number} \(\sqL(X)\), introduced by
Bella and Spadaro~\cite[Section 3]{BellaSpadaroCommon}, is the least
infinite cardinal \(\kappa\) such that the same conclusion holds
whenever the decomposition has at most \(\kappa\) pieces: for every
closed set \(F\subseteq X\), every open cover \(\mathcal U\) of
\(F\), and every decomposition
\[
 \mathcal U=\bigcup_{i\in I}\mathcal U_i,
 \qquad |I|\leq\kappa,
\]
there are families
\(\mathcal V_i\in[\mathcal U_i]^{\leq\kappa}\) such that
\[
 F\subseteq\bigcup_{i\in I}\cl{\bigcup\mathcal V_i}.
\]
Thus \(\sqL(X)\leq pwL_c(X)\). The equality \(\sqL(X)=\omega\) is
the strictly quasi--Lindelöf property introduced by
Arhangel'ski\u{\i}~\cite[Section 14]{ArhangelskiiGeneric}.

We shall use \(t(X)\leq\chi(X)\) and
\(H\psi(X)\leq\chi(X)\).
For completeness, the first inequality follows by choosing, from each
member of a local base at a point \(x\in\cl A\), one point of \(A\).
The second follows by refining disjoint Hausdorff neighborhoods with
local bases of size at most \(\chi(X)\).

We now make explicit the assignment that will be used below. For every
\(x\in X\), choose a family \(\mathcal N_x\) satisfying the requirements
above. If \(\tau_X\) denotes the topology of \(X\), define
\[
 \Phi:X\longrightarrow[\tau_X]^{\leq H\psi(X)},
 \qquad \Phi(x)=\mathcal N_x.
\]
Thus, for every \(x\in X\), the members of \(\Phi(x)\) are open
neighborhoods of \(x\), \(|\Phi(x)|\leq H\psi(X)\),
\(X\in\Phi(x)\), and \(\Phi(x)\) is closed under finite
intersections. Moreover, if \(x,y\in X\) are distinct, then there are
\(U\in\Phi(x)\) and \(V\in\Phi(y)\) such that \(U\cap V=\varnothing\).

Following \cite[Section 2]{BellaCarlsonSpadaroHpsi}, for
\(A\subseteq X\), define
\[
 c_\Phi(A)=
 \{x\in X:(\forall U\in\Phi(x))\ U\cap A\neq\varnothing\}.
\]
Since every member of \(\Phi(x)\) is a neighborhood of \(x\), we have
\(\cl A\subseteq c_\Phi(A)\). Moreover, if \(c_\Phi(A)=A\), then
\(A\) is closed: for each \(x\notin A\), some
\(U\in\Phi(x)\) is disjoint from \(A\).

A transfinite sequence
\(\langle x_\alpha:\alpha<\lambda\rangle\) in \(X\) is
\emph{free} if, for every \(\beta<\lambda\),
\[
 \cl{\{x_\alpha:\alpha<\beta\}}
 \cap
 \cl{\{x_\alpha:\beta\leq\alpha<\lambda\}}
 =\varnothing.
\]

We record the standard relationship between free sequences,
Lindelöfness, and tightness. A proof is included so that no separation
axiom is tacitly used.

\begin{lemma}\label{lem:free}
Let \(L\) be a Lindelöf space and let \(\kappa\) be an infinite
cardinal. If \(t(L)\leq\kappa\), then \(L\) contains no free sequence
of length \(\kappa^+\).
\end{lemma}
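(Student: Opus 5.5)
Suppose, toward a contradiction, that \(\langle x_\alpha:\alpha<\kappa^+\rangle\) is a free sequence in \(L\). For \(\beta<\kappa^+\) write \(F_\beta=\cl{\{x_\alpha:\beta\leq\alpha<\kappa^+\}}\) and \(P_\beta=\cl{\{x_\alpha:\alpha<\beta\}}\); freeness gives \(P_\beta\cap F_\beta=\varnothing\). The \(F_\beta\) form a decreasing family of nonempty closed sets indexed by the regular cardinal \(\kappa^+\). The plan is to produce a point \(p\) in \(\bigcap_{\beta<\kappa^+}F_\beta\) using Lindelöfness, and then use tightness to place \(p\) in some \(P_\gamma\). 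Since \(p\in F_\gamma\) as well, this contradicts freeness. All closures are taken in \(L\), and only the definitions are used, so no separation axiom enters.

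\textbf{Step 1 (Lindelöfness gives a complete accumulation point).} Set \(A=\{x_\alpha:\alpha<\kappa^+\}\). Freeness forces the \(x_\alpha\) to be distinct: if \(x_\alpha=x_{\alpha'}\) with \(\alpha<\alpha'\), then at \(\beta=\alpha+1\) this point lies in both \(P_\beta\) and \(F_\beta\). Hence \(|A|=\kappa^+\), which is uncountable and regular. I claim \(A\) has a point \(p\) every neighbourhood of which contains \(\kappa^+\) many points of \(A\). If not, every \(y\in L\) has an open \(U_y\) with \(|U_y\cap A|\leq\kappa\). A countable subcover then covers \(A\) with at most \(\omega\cdot\kappa=\kappa\) points, which is impossible. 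For such a \(p\) and any \(\beta\), every neighbourhood of \(p\) meets \(\{x_\alpha:\alpha\geq\beta\}\), since only \(|\beta|\leq\kappa\) points come before \(\beta\). So \(p\in F_\beta\) for every \(\beta<\kappa^+\).

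\textbf{Step 2 (tightness).} Since \(p\in\cl A\) and \(t(L)\leq\kappa\), there is \(B\subseteq A\) with \(|B|\leq\kappa\) and \(p\in\cl B\). By regularity of \(\kappa^+\), the set \(B\) is contained in \(\{x_\alpha:\alpha<\gamma\}\) for some \(\gamma<\kappa^+\). Therefore \(p\in P_\gamma\). But also \(p\in F_\gamma\), which contradicts freeness at \(\gamma\).

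The only step that needs care is Step 1. We need the point \(p\) to lie in every tail closure \(F_\beta\), not merely in \(\cl A\). The complete accumulation point argument handles this, and it uses only the countable subcover property together with \(\kappa^+>\kappa\). It does not require \(\kappa^+\) to have uncountable cofinality beyond \(\kappa\geq\omega\).
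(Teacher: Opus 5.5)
Your proof is correct and follows essentially the same route as the paper: distinctness of the terms, a complete accumulation point \(p\) obtained from Lindelöfness, and then tightness together with the regularity of \(\kappa^+\) to place \(p\) in the closure of an initial segment whose complementary tail closure also contains \(p\). The only difference is the order of the steps. You first show that \(p\) lies in every tail closure and then apply tightness, whereas the paper applies tightness first and then uses the complete accumulation point property at the single index \(\beta\) it obtains.
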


\begin{proof}
Suppose that \(\langle x_\alpha:\alpha<\kappa^+\rangle\) is free in
\(L\). Its terms are distinct, since a repeated term would belong both
to the closure of a suitable initial segment and to the closure of the
corresponding tail. Hence
\(D=\{x_\alpha:\alpha<\kappa^+\}\) has cardinality \(\kappa^+\).

The set \(D\) has a complete accumulation point \(p\) in \(L\).
Indeed, otherwise every point of \(L\) would have an open neighborhood
meeting \(D\) in fewer than \(\kappa^+\), hence in at most \(\kappa\),
points. Lindelöfness would then give a countable cover of \(L\) by
such neighborhoods, forcing \(|D|\leq\kappa\cdot\omega=\kappa\), a
contradiction.

Since \(p\in\cl D\) and \(t(L)\leq\kappa\), there is a set
\(A\in[D]^{\leq\kappa}\) such that \(p\in\cl A\), where closures in
this paragraph are taken in \(L\). By the regularity of \(\kappa^+\),
there is \(\beta<\kappa^+\) such that
\(A\subseteq\{x_\alpha:\alpha<\beta\}\). Thus \(p\) belongs to the
closure of this initial segment. Since the initial segment has size at
most \(\kappa\), every neighborhood of the complete accumulation point
\(p\) also meets the tail
\(\{x_\alpha:\beta\leq\alpha<\kappa^+\}\). Hence \(p\) belongs to the
closure of the tail as well, contrary to freeness.
\end{proof}

\section{A covering lemma for bounded decompositions}

The following lemma is the key topological ingredient. It establishes
for SDL spaces, uniformly at every \(\kappa\geq t(X)\), the
bounded-decomposition property underlying the strict
quasi--Lindelöf number. In contrast with the definition of the
piecewise weak Lindelöf degree for closed sets, the decomposition is
required to have at most \(\kappa\) pieces.

\begin{lemma}\label{lem:piecewise}
Let \(X\) be an SDL space, let \(\kappa\) be an infinite cardinal with
\(t(X)\leq\kappa\), and let \(F\) be a closed subset of \(X\). Suppose
that \(\cU\) is an open cover of \(F\) and
\[
 \cU=\bigcup_{i\in I}\cU_i,
 \qquad |I|\leq\kappa.
\]
Then, for each \(i\in I\), there is a family
\(\cV_i\in[\cU_i]^{\leq\kappa}\) such that
\[
 F\subseteq\bigcup_{i\in I}\cl{\bigcup\cV_i}.
\]
\end{lemma}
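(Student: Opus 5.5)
The plan is to argue by contradiction, building a transfinite recursion of length \(\kappa^+\) whose chosen points form a strongly discrete subset of \(F\). The closure of that set is Lindelöf by the SDL property and has tightness at most \(\kappa\), since tightness is hereditary for closed subspaces. Lemma~\ref{lem:free} then rules out a free sequence of length \(\kappa^+\) in it, and more directly the complete-accumulation-point argument from its proof applies.

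So suppose that for every choice of \(\cV_i\in[\cU_i]^{\leq\kappa}\) the set \(F\) is not contained in \(\bigcup_{i\in I}\cl{\bigcup\cV_i}\). By recursion on \(\alpha<\kappa^+\) I will define increasing families \(\cV_i^\alpha\in[\cU_i]^{\leq\kappa}\), points \(x_\alpha\in F\), and open sets \(N_\alpha\ni x_\alpha\). Put \(O_\alpha=\bigcup_{i\in I}\bigcup\cV_i^\alpha\), which is open. At stage \(\alpha\), the families \(\cV_i^\alpha=\bigcup_{\beta<\alpha}\cV_i^{\beta+1}\) still have size at most \(\kappa\), because \(\alpha<\kappa^+\). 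By the hypothesis we pick
\[
 x_\alpha\in F\setminus\bigcup_{i\in I}\cl{\bigcup\cV_i^\alpha}.
\]
We then choose some \(U_\alpha\in\cU_{i_\alpha}\) containing \(x_\alpha\) and add \(U_\alpha\) to \(\cV_{i_\alpha}\). We also add, for each \(i\), any further members needed for the closing-off step described below.

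Strong discreteness should come from taking \(N_\alpha=U_\alpha\setminus\cl{O_\alpha}\), provided \(x_\alpha\notin\cl{O_\alpha}\). Then for \(\beta<\alpha\) we have \(N_\beta\subseteq U_\beta\subseteq O_\alpha\), so \(N_\alpha\cap N_\beta=\varnothing\). Once \(D=\{x_\alpha:\alpha<\kappa^+\}\) is strongly discrete, \(\cl D\) is Lindelöf, so \(D\) has a complete accumulation point \(p\in\cl D\subseteq F\). By tightness, \(p\in\cl{\{x_\beta:\beta<\alpha\}}\) for some \(\alpha<\kappa^+\). The aim is to derive \(p\in\bigcup_i\cl{\bigcup\cV_i^{\gamma}}\) for some \(\gamma\). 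Every neighbourhood of \(p\) contains \(\kappa^+\) points \(x_\gamma\), and those with \(\gamma\geq\gamma_0\) were chosen outside that set, so a contradiction would follow.

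\textbf{Main obstacle.} There is a mismatch between \(\cl{O_\alpha}\), the closure of a union over \(\kappa\) many pieces, and \(\bigcup_i\cl{\bigcup\cV_i^\alpha}\), the union of the individual closures. The strong-discreteness step wants \(x_\alpha\notin\cl{O_\alpha}\). The hypothesis only supplies points outside the union of the individual closures, and this is exactly where the bound \(|I|\leq\kappa\) and tightness must be used. What I would try first is to exploit \(t(X)\leq\kappa\) at \(p\). The set \(\{x_\beta:\beta<\alpha\}\) has size at most \(\kappa\), and it splits into at most \(\kappa\) pieces according to the index \(i_\beta\). Since \(p\) lies in the closure of that set and each \(x_\beta\) lies in \(U_\beta\in\cV_{i_\beta}^{\alpha}\), I would try to show that \(p\) is in the closure of one of these pieces, namely \(\{x_\beta:\beta<\alpha,\ i_\beta=i\}\). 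That would give \(p\in\cl{\bigcup\cV_i^{\alpha}}\), as required. A closure of a union of \(\kappa\) sets need not equal the union of their closures, however. To force this I would organise the recursion by bookkeeping over \(\kappa^+\times I\), so that each index \(i\) is treated cofinally often. I would then replace the choice of \(N_\alpha\) by one avoiding only \(\cl{\bigcup\cV_{i}^\alpha}\) for the currently active \(i\). This step is where I expect the real work to lie.
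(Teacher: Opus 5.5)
There is a genuine gap. You set up the right contradiction and the right recursion of length $\kappa^+$, and you correctly locate the obstacle, but you do not overcome it. Two ideas from the paper's proof are missing.

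First, do not try to make all of $D$ strongly discrete. Let the index be dictated by the point. Choose $x_\alpha\notin\bigcup_{i\in I}\cl{\bigcup\cV_i^\alpha}$, then $U_\alpha\in\cU$ with $x_\alpha\in U_\alpha$, then $i_\alpha$ with $U_\alpha\in\cU_{i_\alpha}$. Put $W_\alpha=U_\alpha\setminus\cl{\bigcup\cV_{i_\alpha}^\alpha}$, which does contain $x_\alpha$. Now only the $W_\alpha$ sharing an index are pairwise disjoint. That is enough: since $|I|\leq\kappa$ and $\kappa^+$ is regular, some $i_*$ occurs $\kappa^+$ times. Enumerate that class increasingly as $y_\xi=x_{a_\xi}$; then $D=\{y_\xi:\xi<\kappa^+\}$ is strongly discrete, and your worry about landing in the closure of a single piece disappears. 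Bookkeeping over $\kappa^+\times I$ cannot replace this pigeonhole step, because you cannot force $x_\alpha$ to be covered by a member of a prescribed $\cU_i$.

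Second, the closing-off step is never specified, and the contradiction you aim for is not one. If $p\in\cl{\bigcup\cV_i^\gamma}$ and the later points merely lie outside this closed set, they can still accumulate at $p$ when $p$ is a boundary point. You need an open set containing $p$ that all later points avoid. The paper obtains it by using the SDL property at every stage, not only at the end. Each $D_i^\alpha=\{x_\beta:\beta<\alpha,\ i_\beta=i\}$ is strongly discrete (this is why per-class disjointness is the right invariant). So $\cl{D_i^\alpha}\subseteq F$ is Lindelöf and is covered by a countable $\cC_i^\alpha\subseteq\cU$. Every $C\in\cC_i^\alpha$ is put into $\cV_j^\alpha$, and hence into all later $\cV_j^\beta$, for each $j$ with $C\in\cU_j$. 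This adds at most $|I|\cdot\omega\leq\kappa$ sets per stage, so the size bound survives. Then every later point avoids the open set $\bigcup\cC_{i_*}^{a_\xi}\supseteq\cl{\{y_\eta:\eta<\xi\}}$. Hence $\langle y_\xi:\xi<\kappa^+\rangle$ is free, and Lemma~\ref{lem:free}, applied in the Lindelöf set $\cl D$, gives the contradiction.

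In your own formulation: tightness puts the complete accumulation point $p$ of $D$ into the closure of some initial segment $\{y_\eta:\eta<\xi\}$. Hence $p$ lies in this open set, which contains at most $|\xi|\leq\kappa$ points of $D$, which is impossible.
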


\begin{proof}
Suppose, toward a contradiction, that no such families exist. We
construct recursively, for \(\alpha<\kappa^+\), a point
\(x_\alpha\in F\), an index \(i_\alpha\in I\), and open sets
\[
 x_\alpha\in W_\alpha\subseteq U_\alpha\in\cU_{i_\alpha}
\]
so that, for every \(i\in I\), the sets \(W_\alpha\) with
\(i_\alpha=i\) are pairwise disjoint.

Assume that the construction has been carried out before stage
\(\alpha\). For \(i\in I\), put
\(D_i^\alpha=\{x_\beta:\beta<\alpha\text{ and }i_\beta=i\}\). This set
is strongly discrete, as witnessed by those already chosen sets
\(W_\beta\) for which \(i_\beta=i\). Thus \(\cl{D_i^\alpha}\) is
Lindelöf. Since \(F\) is closed,
\(\cl{D_i^\alpha}\subseteq F\), so there is a countable family
\(\cC_i^\alpha\subseteq\cU\) covering \(\cl{D_i^\alpha}\).

For every \(j\in I\), define
\[
 \cV_j^\alpha=
 \{U_\beta:\beta<\alpha,\ i_\beta=j\}
 \cup
 \bigcup_{\substack{\gamma\leq\alpha\\ i\in I}}
       (\cC_i^\gamma\cap\cU_j).
\]
Because \(\alpha<\kappa^+\), we have \(|\alpha|\leq\kappa\).
Together with \(|I|\leq\kappa\) and the countability of all the
families \(\cC_i^\gamma\), this gives
\(|\cV_j^\alpha|\leq\kappa\) for every \(j\in I\). Our assumption
therefore permits us to choose
\[
 x_\alpha\in
 F\setminus\bigcup_{j\in I}\cl{\bigcup\cV_j^\alpha}.
\]
Choose \(U_\alpha\in\cU\) with \(x_\alpha\in U_\alpha\), and then
choose \(i_\alpha\in I\) such that
\(U_\alpha\in\cU_{i_\alpha}\). Since
\(x_\alpha\notin\cl{\bigcup\cV_{i_\alpha}^\alpha}\), there is an open
set \(O_\alpha\) containing \(x_\alpha\) and disjoint from
\(\bigcup\cV_{i_\alpha}^\alpha\). Set
\begin{itemize}
 \item \(W_\alpha=O_\alpha\cap U_\alpha\);
 \item \(x_\alpha\in W_\alpha\subseteq U_\alpha\);
 \item \(W_\alpha\cap\bigcup\cV_{i_\alpha}^\alpha=\varnothing\).
\end{itemize}
If \(\beta<\alpha\) and \(i_\beta=i_\alpha\), then
\(U_\beta\in\cV_{i_\alpha}^\alpha\), and
\(W_\beta\subseteq U_\beta\). Hence
\(W_\alpha\cap W_\beta=\varnothing\), so the recursion continues.

Since \(|I|\leq\kappa\) and \(\kappa^+\) is regular, some
\(i_*\in I\) occurs \(\kappa^+\) times. Write
\[
 \{\alpha<\kappa^+:i_\alpha=i_*\}
 =\{a_\xi:\xi<\kappa^+\},
\]
where the enumeration is increasing, and put
\(y_\xi=x_{a_\xi}\). The set
\(D=\{y_\xi:\xi<\kappa^+\}\) is strongly discrete, as witnessed by
\(\{W_{a_\xi}:\xi<\kappa^+\}\).

We claim that \(\langle y_\xi:\xi<\kappa^+\rangle\) is free. Fix
\(\xi<\kappa^+\). At stage \(a_\xi\), the open set
\(G_\xi=\bigcup\cC_{i_*}^{a_\xi}\) covers
\(\cl{\{y_\eta:\eta<\xi\}}\). If
\(C\in\cC_{i_*}^{a_\xi}\), then \(C\in\cU_j\) for some \(j\in I\).
For every \(\eta\geq\xi\), the cumulative definition gives
\(C\in\cV_j^{a_\eta}\). By the choice of \(x_{a_\eta}\), we have
\(y_\eta=x_{a_\eta}\notin
\cl{\bigcup\cV_j^{a_\eta}}\), and hence \(y_\eta\notin C\). Thus the
tail \(\{y_\eta:\eta\geq\xi\}\) is contained in the closed set
\(X\setminus G_\xi\), and so its closure is disjoint from \(G_\xi\).
Consequently,
\[
 \cl{\{y_\eta:\eta<\xi\}}
 \cap
 \cl{\{y_\eta:\xi\leq\eta<\kappa^+\}}
 =\varnothing,
\]
which proves the claim.

By the SDL property, \(L=\cl D\) is Lindelöf. Since \(L\) is closed in
\(X\), \(t(L)\leq t(X)\leq\kappa\): indeed, if
\(A\subseteq L\) and \(p\) belongs to the closure of \(A\) in \(L\),
then \(p\in\cl A\) in \(X\), and a set witnessing tightness in \(X\)
also witnesses tightness in \(L\). The sequence remains free in the
subspace \(L\), contradicting Lemma~\ref{lem:free}.
\end{proof}

\begin{corollary}\label{cor:sql}
Every SDL space \(X\) satisfies
\[
 \sqL(X)\leq t(X).
\]
In particular, every countably tight SDL space is strictly
quasi--Lindelöf.
\end{corollary}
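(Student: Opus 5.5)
The plan is to read off Corollary~\ref{cor:sql} directly from Lemma~\ref{lem:piecewise} with the choice \(\kappa=t(X)\). Recall that \(\sqL(X)\) is the least infinite cardinal \(\kappa\) such that, for every closed \(F\subseteq X\), every open cover \(\cU\) of \(F\), and every decomposition \(\cU=\bigcup_{i\in I}\cU_i\) with \(|I|\leq\kappa\), there are \(\cV_i\in[\cU_i]^{\leq\kappa}\) with \(F\subseteq\bigcup_{i\in I}\cl{\bigcup\cV_i}\). To get \(\sqL(X)\leq t(X)\), it therefore suffices to check that the cardinal \(t(X)\) has this property.

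Since all cardinal invariants are taken to be infinite, \(t(X)\) is an infinite cardinal, and trivially \(t(X)\leq t(X)\). So Lemma~\ref{lem:piecewise} applies with \(\kappa=t(X)\). Fix a closed set \(F\), an open cover \(\cU\) of \(F\), and a decomposition of \(\cU\) into at most \(t(X)\) pieces. The lemma yields subfamilies \(\cV_i\in[\cU_i]^{\leq t(X)}\) whose closed unions cover \(F\). This is exactly the defining condition at \(t(X)\), so by minimality \(\sqL(X)\leq t(X)\).

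For the second sentence, suppose \(X\) is countably tight, i.e.\ \(t(X)=\omega\). Then \(\omega\leq\sqL(X)\leq\omega\), so \(\sqL(X)=\omega\). By the remark following the definition of \(\sqL\), this equality is the strictly quasi--Lindelöf property.

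The only point needing care is to confirm that the definition of \(\sqL\) uses the same bound \(\kappa\) both for the number of pieces and for the sizes of the \(\cV_i\). It does, and so the lemma matches the definition verbatim with no additional obstacle.
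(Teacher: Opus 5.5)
Your proposal is correct and matches the paper's proof: apply Lemma~\ref{lem:piecewise} with \(\kappa=t(X)\) to get the defining property of \(\sqL\) at \(t(X)\). Then note that when \(t(X)=\omega\) this property is exactly strict quasi--Lindelöfness.
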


\begin{proof}
Apply Lemma~\ref{lem:piecewise} with \(\kappa=t(X)\). When
\(t(X)=\omega\), the resulting property is precisely strict
quasi--Lindelöfness.
\end{proof}

\section{The cardinal inequality and its consequences}

\begin{theorem}\label{thm:main}
If \(X\) is a Hausdorff SDL space, then
\[
 |X|\leq 2^{t(X)H\psi(X)}\leq 2^{\chi(X)}.
\]
\end{theorem}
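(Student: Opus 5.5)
Set \(\kappa=t(X)H\psi(X)\) and fix the assignment \(\Phi\) together with the operator \(c_\Phi\) from the preliminaries. The second inequality is immediate from \(t(X)\leq\chi(X)\) and \(H\psi(X)\leq\chi(X)\), so the work is in proving \(|X|\leq 2^\kappa\). I would run a closing-off argument in the style of Bella, Carlson and Spadaro, using Lemma~\ref{lem:piecewise} at this \(\kappa\) (legitimate since \(\kappa\geq t(X)\)) in place of \(pwL_c(X)\). Take an elementary submodel \(M\) of \(H(\theta)\) for large \(\theta\) with \(X,\tau_X,\Phi,\kappa\in M\), \(|M|=2^\kappa\), \(M\) closed under \(\kappa\)-sequences, and \(2^\kappa+1\subseteq M\). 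The goal is \(X\subseteq M\).

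\textbf{Step 1: \(X\cap M\) is closed.} Let \(p\in\cl{X\cap M}\). By \(t(X)\leq\kappa\) there is \(B\in[X\cap M]^{\leq\kappa}\) with \(p\in\cl B\), and \(B\in M\) by \(\kappa\)-closure. For each \(U\in\Phi(p)\) we have \(U\cap B\neq\varnothing\). To place \(p\) in \(M\), I would use the usual Hausdorff-pseudocharacter trick. Since \(|\Phi(p)|\leq\kappa\), one picks for each \(U\in\Phi(p)\) a point of \(U\cap B\). Then \(p\) is the unique point \(q\) satisfying \(\Phi(q)=\Phi(p)\), since two distinct points have disjoint members of their respective families. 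The set \(\Phi(p)\) is a \(\kappa\)-sized family of open sets, so it is in \(M\) provided each of its members is in \(M\); then \(p\) is definable from it and lies in \(M\). Bella--Carlson--Spadaro handle exactly this by first closing the model so that \(|c_\Phi(A)|\leq 2^\kappa\) for \(A\in[X]^{\leq\kappa}\). I would follow them: the standard count is that each \(x\in c_\Phi(B)\) is determined by the trace \(\{U\cap B:U\in\Phi(x)\}\) together with Hausdorff separation, giving \(|c_\Phi(B)|\leq 2^\kappa\). Hence \(c_\Phi(B)\in M\) and \(c_\Phi(B)\subseteq M\), so \(p\in M\).

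\textbf{Step 2: \(X\subseteq M\).} Suppose \(q\in X\setminus M\). For each \(x\in X\cap M\) choose \(U_x\in\Phi(x)\) and \(V_x\in\Phi(q)\) with \(U_x\cap V_x=\varnothing\). Note \(\Phi(x)\subseteq M\) because \(\Phi(x)\in M\) has size at most \(\kappa\). Decompose the cover \(\{U_x:x\in X\cap M\}\) of the closed set \(F=X\cap M\) according to the value \(V_x\in\Phi(q)\); this gives at most \(\kappa\) pieces \(\cU_V\), \(V\in\Phi(q)\). Lemma~\ref{lem:piecewise} then yields \(\cV_V\in[\cU_V]^{\leq\kappa}\) with \(F\subseteq\bigcup_V\cl{\bigcup\cV_V}\). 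Each \(\cV_V\) lies in \(M\) by \(\kappa\)-closure, and \(\cl{\bigcup\cV_V}\) misses \(V\ni q\). Therefore
\[
 F\subseteq G:=\bigcup_{V}\cl{\textstyle\bigcup\cV_V},
 \qquad q\notin G.
\]
Since the whole sequence \(\langle\cV_V\rangle\) has length at most \(\kappa\), it is in \(M\), and so \(G\in M\). Now \(H(\theta)\models\exists q\,(q\notin G)\), so by elementarity there is some \(q'\in M\) with \(q'\notin G\). But \(q'\in X\cap M=F\subseteq G\), a contradiction.

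The main obstacle is Step 1: making \(X\cap M\) closed using only tightness and \(H\psi\), and in particular the bound \(|c_\Phi(B)|\leq 2^\kappa\) for small \(B\) in a merely Hausdorff space. I would import this bound directly from the cited operator argument of Bella, Carlson and Spadaro, where \(c_\Phi\) is shown to be idempotent-compatible. Failing that, I would replace Step 1 by closing \(M\) under \(c_\Phi\) of \(\kappa\)-sized subsets and invoke \(t(X)\leq\kappa\) to reduce closure to such sets.
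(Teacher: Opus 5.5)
Your proof is correct, and your Step~2 is essentially the paper's: split the cover $\{U_x:x\in X\cap M\}$ according to the member of $\Phi(q)$ that $U_x$ misses, apply Lemma~\ref{lem:piecewise} at $\kappa=t(X)H\psi(X)$, and reflect.

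The real difference is Step~1. You get $X\cap M$ closed from three ingredients:
\begin{itemize}
\item a set $B\in[X\cap M]^{\leq\kappa}$ with $p\in\cl B\subseteq c_\Phi(B)$;
\item the bound $|c_\Phi(B)|\leq 2^\kappa$;
\item the extra requirement $2^\kappa\subseteq M$.
\end{itemize}
The bound is right, via injectivity of $x\mapsto\{U\cap B:U\in\Phi(x)\}$ on $c_\Phi(B)$. It does use that each $\Phi(y)$ is closed under finite intersections: if $U\in\Phi(x)$ and $V\in\Phi(y)$ are disjoint and $U\cap B=V'\cap B$ with $V'\in\Phi(y)$, then $V\cap V'\in\Phi(y)$ misses $B$, contradicting $y\in c_\Phi(B)$.

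The paper needs neither the counting bound nor $2^\kappa\subseteq M$. For $p\in c_\Phi(X\cap M)$ it picks a point of $X\cap M$ in each member of $\mathcal N_p$, forming $A_p\in M$. It then shows $\{p\}=\bigcap\{c_\Phi(V\cap A_p):V\in\mathcal N_p\}$, the intersection of a family in $[M]^{\leq\kappa}\subseteq M$. So the paper's model only needs $\kappa+1\subseteq M$, tightness is used only inside Lemma~\ref{lem:piecewise}, and one gets the stronger identity $c_\Phi(X\cap M)=X\cap M$. Your version is the more classical closing-off, at the price of a slightly larger model.

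You did not need tightness in Step~1 either: the same choice of $A_p$ already gives $p\in c_\Phi(A_p)$. The detour through ``$\Phi(q)=\Phi(p)$'' does no work and can be dropped.

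One small repair in Step~2. Since $q\notin M$ and $\bigcap\Phi(q)=\{q\}$, the family $\Phi(q)$ is not in $M$. Hence neither is the sequence $\langle\cV_V:V\in\Phi(q)\rangle$, whose domain is $\Phi(q)$. You only need $G\in M$, and this holds because $G$ is definable from the set $\{\cV_V:V\in\Phi(q)\}\in[M]^{\leq\kappa}\subseteq M$. Alternatively, index the pieces by ordinals $\alpha<\kappa$ as the paper does. The reflected statement should also read $\exists q\in X\,(q\notin G)$.
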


\begin{proof}
The elementary-submodel part of the argument adapts the proof scheme
of \cite[Section 2]{BellaCarlsonSpadaroHpsi}; the new input is
Lemma~\ref{lem:piecewise}.
Set \(\kappa=t(X)\cdot H\psi(X)\).
Choose a sufficiently large regular cardinal \(\Theta\) and an
elementary submodel \(M\prec H(\Theta)\) such that
\[
 X,\tau_X,\Phi,\kappa\in M,\qquad
 \kappa+1\subseteq M,\qquad
 |M|=2^\kappa,\qquad
 [M]^{\leq\kappa}\subseteq M.
\]
The standard Skolem-hull construction produces such a model; see
\cite{Dow,Kunen}; the relevant cardinal arithmetic is
\((2^\kappa)^\kappa=2^{\kappa\cdot\kappa}=2^\kappa\).

We shall repeatedly use two closure facts. First, every subset of
\(M\) of cardinality at most \(\kappa\) belongs to \(M\). Second, if
\(B\in M\) and \(|B|\leq\kappa\), then \(B\subseteq M\). For the
second fact, if \(B\) is nonempty, elementarity gives a surjection
\(f\in M\) from \(\kappa\) onto \(B\). Since \(\kappa\subseteq M\),
we have \(f(\xi)\in M\) for every \(\xi<\kappa\), and hence
\(B\subseteq M\); the empty case is immediate.

Let \(F=X\cap M\). We first prove that \(c_\Phi(F)=F\), which also
shows that \(F\) is closed in \(X\). Let
\(p\in c_\Phi(F)\). For each \(V\in\mathcal N_p\), choose
\(x_V\in V\cap F\), and set
\(A_p=\{x_V:V\in\mathcal N_p\}\).
Then \(A_p\subseteq M\) and \(|A_p|\leq\kappa\), so \(A_p\in M\).

Fix \(V\in\mathcal N_p\). For every \(W\in\mathcal N_p\), the set
\(T=V\cap W\) belongs to \(\mathcal N_p\), and
\(x_T\in V\cap W\cap A_p\). Hence \(p\in c_\Phi(V\cap A_p)\), and
this holds for every \(V\in\mathcal N_p\). If \(q\neq p\), choose
\(U_q\in\mathcal N_p\) and \(W_q\in\mathcal N_q\) with
\(U_q\cap W_q=\varnothing\). Then \(W_q\) is disjoint from
\(U_q\cap A_p\), so \(q\notin c_\Phi(U_q\cap A_p)\). Thus, for
\[
 \mathcal E_p=\{c_\Phi(V\cap A_p):V\in\mathcal N_p\},
 \qquad
 \bigcap\mathcal E_p=\{p\}.
\]
For each \(V\in\mathcal N_p\), the set \(V\cap A_p\) is a subset of
\(M\) of cardinality at most \(\kappa\), and hence belongs to \(M\).
The set \(c_\Phi(V\cap A_p)\) is definable from \(V\cap A_p\), \(X\),
and \(\Phi\), so it also belongs to \(M\). Therefore
\(\mathcal E_p\subseteq M\) and \(|\mathcal E_p|\leq\kappa\), whence
\(\mathcal E_p\in M\). Since intersection is definable,
\(\{p\}=\bigcap\mathcal E_p\in M\), and therefore \(p\in M\). This
proves \(c_\Phi(F)\subseteq F\); the reverse inclusion is immediate.

We now show that \(X\subseteq M\). Suppose, toward a contradiction,
that \(p\in X\setminus M\), and enumerate
\(\mathcal N_p=\{B_\alpha:\alpha<\kappa\}\), repeating members if
necessary. Since \(p\notin M\), each \(x\in F\) is distinct from
\(p\). Thus the definition of \(H\psi(X)\) yields
\(U_x\in\mathcal N_x\) and \(\alpha(x)<\kappa\) such that
\[
 x\in U_x,
 \qquad
 U_x\cap B_{\alpha(x)}=\varnothing.
\]
Since \(x,\Phi\in M\), we have
\(\mathcal N_x=\Phi(x)\in M\). Moreover,
\(|\mathcal N_x|\leq\kappa\), so the second closure fact gives
\(\mathcal N_x\subseteq M\). In particular, \(U_x\in M\).

The family \(\cU=\{U_x:x\in F\}\)
is an open cover of \(F\). Decompose it into at most \(\kappa\)
pieces by setting
\[
 \cU_\alpha=
 \{U_x:x\in F\text{ and }\alpha(x)=\alpha\},
 \qquad \alpha<\kappa.
\]
Lemma~\ref{lem:piecewise} gives families
\(\cV_\alpha\in[\cU_\alpha]^{\leq\kappa}\) such that
\[
 F\subseteq
 E:=\bigcup_{\alpha<\kappa}\cl{\bigcup\cV_\alpha}.
\]

Each \(\cV_\alpha\) is a subset of \(M\) of cardinality at most
\(\kappa\), so \(\cV_\alpha\in M\). Since
\(\alpha,\cV_\alpha\in M\) for each \(\alpha<\kappa\), every ordered
pair \(\langle\alpha,\cV_\alpha\rangle\) belongs to \(M\). Hence the
graph of \(\langle\cV_\alpha:\alpha<\kappa\rangle\) is a subset of
\(M\) of cardinality at most \(\kappa\), and thus belongs to \(M\).
The set \(E\), being definable from this sequence, \(X\), and
\(\tau_X\), also belongs to \(M\).

If \(X\nsubseteq E\), elementarity would produce a point of
\((X\setminus E)\cap M\), contradicting
\(F=X\cap M\subseteq E\). Hence \(X\subseteq E\). On the other hand,
for every \(\alpha<\kappa\), the open neighborhood \(B_\alpha\) of
\(p\) is disjoint from \(\bigcup\cV_\alpha\). Hence
\(p\notin\cl{\bigcup\cV_\alpha}\) for every \(\alpha<\kappa\), and
therefore \(p\notin E\), a contradiction.

We conclude that \(X\subseteq M\), and consequently
\[
 |X|\leq|M|=2^\kappa=2^{t(X)H\psi(X)}.
\]
Finally, \(t(X),H\psi(X)\leq\chi(X)\), and multiplication of infinite
cardinals is their maximum. Hence
\(t(X)\cdot H\psi(X)\leq\chi(X)\), which proves the second inequality.
\end{proof}

\begin{corollary}[Answer to Question 2.1]\label{cor:q21}
Every Hausdorff SDL space \(X\) satisfies
\[
 |X|\leq 2^{\chi(X)}.
\]
\end{corollary}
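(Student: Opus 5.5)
This corollary follows immediately from Theorem~\ref{thm:main}, so the plan is simply to extract the second inequality in that theorem. Let \(X\) be a Hausdorff SDL space. Theorem~\ref{thm:main} already gives
\[
 |X|\leq 2^{t(X)H\psi(X)}.
\]
It remains to bound the exponent by \(\chi(X)\).

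For this I will use the two inequalities recorded in the preliminaries, \(t(X)\leq\chi(X)\) and \(H\psi(X)\leq\chi(X)\). The first holds because one may choose a point of \(A\) from each member of a local base at a point \(x\in\cl A\). The second holds because disjoint Hausdorff neighborhoods can be shrunk to members of local bases of size at most \(\chi(X)\). All cardinal invariants here are infinite, so the product of two infinite cardinals is their maximum. Hence \(t(X)\cdot H\psi(X)\leq\chi(X)\).

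Monotonicity of cardinal exponentiation in the exponent then gives \(2^{t(X)H\psi(X)}\leq 2^{\chi(X)}\), and combining this with the theorem yields \(|X|\leq 2^{\chi(X)}\). There is no real obstacle in this step. All the difficulty lies in Theorem~\ref{thm:main}, and in particular in Lemma~\ref{lem:piecewise}, which supplies the bounded-decomposition property needed in the elementary-submodel argument.

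This answers Question~2.1 of Bella and Spadaro: the Urysohn hypothesis in their bound \(|X|\leq 2^{\chi(X)}\) can be weakened to Hausdorff.
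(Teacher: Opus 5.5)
Your proposal is correct and follows the paper's own route: the corollary is the second inequality of Theorem~\ref{thm:main}, and the paper obtains it exactly as you do, from \(t(X),H\psi(X)\leq\chi(X)\) and the fact that the product of infinite cardinals is their maximum. Your justifications of the two inequalities also match the ones recorded in the preliminaries.
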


For completeness, we include the reduction of Question~2.2 to
Question~2.1. This reduction already appears in
\cite[Lemmas 1 and 2]{BellaCellularCompact} and also in
\cite[Lemmas 3 and 4]{BellaSpadaroSDL}.

\begin{lemma}\label{lem:first-countable-sdl}
Every first-countable Hausdorff space has a disjoint local
\(\pi\)-base at each point. Consequently, every first-countable
Hausdorff strongly cellular--Lindelöf space is an SDL space.
\end{lemma}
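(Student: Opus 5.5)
We prove the two assertions of Lemma~\ref{lem:first-countable-sdl} in order.

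\emph{First assertion.} Fix a first-countable Hausdorff space \(X\) and a point \(x\in X\). If \(x\) is isolated, then \(\{\{x\}\}\) is a disjoint local \(\pi\)-base, so assume \(x\) is not isolated. Take a decreasing countable local base \(\{B_n:n<\omega\}\) at \(x\). We build, by recursion, points \(x_n\neq x\) and open sets \(P_n\) with \(x_n\in P_n\subseteq B_n\) such that the \(P_n\) are pairwise disjoint and \(x\notin\cl{P_n}\).

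At stage \(n\), the inductive hypothesis gives an open neighbourhood \(G_n\) of \(x\) that misses \(P_0\cup\dots\cup P_{n-1}\). This is possible because each \(x\notin\cl{P_k}\). Since \(x\) is not isolated, pick a point \(x_n\in G_n\cap B_n\setminus\{x\}\). By the Hausdorff property, choose disjoint open sets \(O\ni x_n\) and \(O'\ni x\). Then put \(P_n=O\cap G_n\cap B_n\). Now \(x\notin\cl{P_n}\), because \(O'\) is disjoint from \(P_n\). Also, \(P_n\) misses the earlier \(P_k\), because \(P_n\subseteq G_n\).

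The resulting family \(\{P_n\}\) consists of nonempty, pairwise disjoint open sets with \(P_n\subseteq B_n\). Since every neighbourhood of \(x\) contains some \(B_n\), it contains \(P_n\), so \(\{P_n\}\) is a local \(\pi\)-base at \(x\). The only care needed here is maintaining the open set \(G_n\), which is exactly why we keep \(x\notin\cl{P_k}\) throughout.

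\emph{Second assertion.} Let \(X\) be first-countable, Hausdorff and strongly cellular--Lindelöf, and let \(D\) be strongly discrete, witnessed by pairwise disjoint open sets \(W_d\ni d\). We must show that \(\cl D\) is Lindelöf.

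For each \(d\in D\), apply the first assertion to get a disjoint local \(\pi\)-base at \(d\), and intersect its members with \(W_d\). Members lying in a neighbourhood of \(d\) inside \(W_d\) keep the \(\pi\)-base property, so we may assume every member lies in \(W_d\); call this family \(\mathcal P_d\). Then \(\cU=\bigcup_{d\in D}\mathcal P_d\) is a cellular family. Disjointness within each \(\mathcal P_d\) is given, and disjointness across different \(d\) holds because the sets \(W_d\) are pairwise disjoint.

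Strong cellular--Lindelöfness then gives a closed Lindelöf set \(L\) meeting every member of \(\cU\). Fix \(d\in D\) and a neighbourhood \(N\) of \(d\). Then \(N\) contains some \(P\in\mathcal P_d\), and \(L\cap P\neq\varnothing\), so \(N\) meets \(L\). Hence \(d\in\cl L=L\), which gives \(D\subseteq L\) and therefore \(\cl D\subseteq L\).

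Finally, \(\cl D\) is a closed subset of the Lindelöf space \(L\), so it is Lindelöf. Thus \(X\) is an SDL space, as required.
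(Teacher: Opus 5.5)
Your proof is correct and follows essentially the same route as the paper. Both use the Hausdorff property to keep \(x\) out of the closure of each \(P_n\): the paper passes to a subsequence \(U_{k_n}\) of the base lying inside the separating neighbourhoods, while you maintain \(G_n=X\setminus(\cl{P_0}\cup\dots\cup\cl{P_{n-1}})\). The SDL part is the same cellular-family argument. Keeping only the members of \(\mathcal P_d\) contained in \(W_d\) is a valid substitute for the paper's choice of a local base starting inside \(W_d\), but if you literally intersect with \(W_d\) you must discard empty intersections, since a cellular family consists of nonempty sets.
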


\begin{proof}
Fix \(x\in X\). If \(x\) is isolated, then
\(\{\{x\}\}\) is a disjoint local \(\pi\)-base at \(x\). Suppose that
\(x\) is not isolated, and fix a decreasing local base
\(\{U_n:n<\omega\}\) at \(x\).

We recursively construct integers \(k_0<k_1<\cdots\) and nonempty
open sets \(P_n\) such that
\[
 P_n\subseteq U_{k_n}
 \qquad\text{and}\qquad
 P_n\cap U_{k_{n+1}}=\varnothing.
\]
Set \(k_0=0\). Once \(k_n\) has been chosen, take
\(y_n\in U_{k_n}\setminus\{x\}\). By the Hausdorff property, there
are disjoint open sets \(O_n\) and \(Q_n\) with
\(x\in O_n\) and \(y_n\in Q_n\). Put
\(P_n=Q_n\cap U_{k_n}\), and choose \(k_{n+1}>k_n\) so that
\(U_{k_{n+1}}\subseteq O_n\). Then \(P_n\) is nonempty and has the
required properties.

If \(m>n\), then
\(P_m\subseteq U_{k_m}\subseteq U_{k_{n+1}}\), so
\(P_m\cap P_n=\varnothing\). Moreover, the strictly increasing
sequence \((k_n)_{n<\omega}\) is cofinal in \(\omega\). Given a
neighborhood \(G\) of \(x\), choose \(r<\omega\) such that
\(U_r\subseteq G\), and then choose \(n\) with \(k_n\geq r\). It
follows that \(P_n\subseteq U_{k_n}\subseteq U_r\subseteq G\).
Therefore \(\{P_n:n<\omega\}\) is a disjoint local \(\pi\)-base at
\(x\).

Now suppose that \(X\) is also strongly cellular--Lindelöf. Let
\(D\subseteq X\) be strongly discrete, witnessed by a pairwise
disjoint open expansion \(\{W_d:d\in D\}\). For each \(d\in D\),
choose a disjoint local \(\pi\)-base \(\mathcal P_d\) at \(d\) all of
whose members are contained in \(W_d\). This can be done by starting
the preceding construction with a local base whose first member is
contained in \(W_d\). The family
\(\mathcal P=\bigcup_{d\in D}\mathcal P_d\) is cellular. Hence there
is a closed Lindelöf subspace \(L\) of \(X\) meeting every member of
\(\mathcal P\).

Every neighborhood of \(d\in D\) contains some
\(P\in\mathcal P_d\), and this \(P\) meets \(L\). Thus
\(d\in\cl L=L\). It follows that \(D\subseteq L\), and therefore
\(\cl D\subseteq L\). Since \(\cl D\) is closed in the Lindelöf space
\(L\), it is Lindelöf. Thus \(X\) is an SDL space.
\end{proof}

\begin{corollary}[Answer to Question 2.2]\label{cor:q22}
If \(X\) is first-countable, Hausdorff, and strongly
cellular--Lindelöf, then
\[
 |X|\leq\mathfrak c.
\]
\end{corollary}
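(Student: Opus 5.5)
The plan is to combine Lemma~\ref{lem:first-countable-sdl} with Theorem~\ref{thm:main} (equivalently Corollary~\ref{cor:q21}). Let \(X\) be first-countable, Hausdorff, and strongly cellular--Lindelöf. By Lemma~\ref{lem:first-countable-sdl}, \(X\) is an SDL space. Since \(X\) is also Hausdorff, Theorem~\ref{thm:main} gives
\[
 |X|\leq 2^{t(X)H\psi(X)}\leq 2^{\chi(X)}.
\]

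Next we compute the exponent. First-countability means \(\chi(x,X)\leq\omega\) for every \(x\in X\). Since all cardinal invariants are taken to be infinite, this gives \(\chi(X)=\omega\). Hence \(|X|\leq 2^{\omega}=\mathfrak c\).

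Alternatively, one can avoid passing through \(\chi(X)\) and bound the two factors directly. We have \(t(X)\leq\chi(X)=\omega\), and \(H\psi(X)\leq\chi(X)=\omega\) by the inequalities recorded in the preliminaries. So the exponent \(t(X)H\psi(X)\) equals \(\omega\).

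There is no real obstacle here; all the substantive work lies in Lemma~\ref{lem:piecewise}, Theorem~\ref{thm:main}, and the reduction in Lemma~\ref{lem:first-countable-sdl}. The only point to check is that the hypotheses match. Theorem~\ref{thm:main} requires Hausdorff together with SDL. Lemma~\ref{lem:first-countable-sdl} requires first-countable, Hausdorff, and strongly cellular--Lindelöf, which are exactly the assumptions of the corollary.
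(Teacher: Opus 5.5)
Your proposal is correct and follows the paper's proof: Lemma~\ref{lem:first-countable-sdl} makes \(X\) an SDL space, and Theorem~\ref{thm:main} with \(\chi(X)=\omega\) then gives \(|X|\leq 2^\omega=\mathfrak c\). The paper also notes an alternative route through Corollary~\ref{cor:sql} and Arhangel'ski\u{\i}'s result \cite[Corollary 22]{ArhangelskiiGeneric}.
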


\begin{proof}
By Lemma~\ref{lem:first-countable-sdl}, the space \(X\) is SDL. Since
\(\chi(X)=\omega\), Theorem~\ref{thm:main} gives
\(|X|\leq 2^\omega=\mathfrak c\).
Alternatively, Corollary~\ref{cor:sql} and
\cite[Corollary 22]{ArhangelskiiGeneric} give the same conclusion.
\end{proof}

\begin{remark}
Lemma~\ref{lem:piecewise} does not assert the unrestricted inequality
\(pwL_c(X)\leq t(X)\). At \(\kappa=t(X)\), it yields the weaker
inequality \(\sqL(X)\leq t(X)\), but the lemma says more: it proves the
bounded-decomposition property directly at every
\(\kappa\geq t(X)\). We use it at
\(\kappa=t(X)H\psi(X)\), because the pieces in the proof of
Theorem~\ref{thm:main} are indexed by the family \(\mathcal N_p\)
associated with a single point \(p\).
\end{remark}

\section*{Acknowledgments}

The results presented in this paper were obtained as part of the
undergraduate research project of João Marcelo Maciel Messias, under
the supervision of Gabriel Fernandes.

João Marcelo Maciel Messias was supported by the São Paulo Research
Foundation (FAPESP), grant 2025/08845-7. Gabriel Fernandes was supported
by FAPESP, grant 2025/09425-1.

\section*{Declaration of AI use}

OpenAI's GPT-5.6 Sol, accessed through Codex, was used for brainstorming
and testing ideas, producing first drafts of proofs, and assisting with
bibliographic searches, proofreading, LaTeX editing, and manuscript
revision. All AI-generated output and suggested references were
reviewed, hand-edited, and revised by the authors, who take full
responsibility for the final manuscript.


\begin{thebibliography}{99}

\bibitem{ArhangelskiiGeneric}
A.~V. Arhangel'ski\u{\i},
\newblock A generic theorem in the theory of cardinal invariants of
topological spaces,
\newblock \emph{Comment. Math. Univ. Carolin.} \textbf{36} (1995),
303--325.

\bibitem{BellaCellularCompact}
A.~Bella,
\newblock On cellular-compact and related spaces,
\newblock \emph{Topology Appl.} \textbf{281} (2020), Article 107203,
\newblock \href{https://doi.org/10.1016/j.topol.2020.107203}
{doi:10.1016/j.topol.2020.107203}.

\bibitem{BellaCarlsonSpadaroHpsi}
A.~Bella, N.~Carlson, and S.~Spadaro,
\newblock Cardinal inequalities involving the Hausdorff
pseudocharacter,
\newblock \emph{Rev. Real Acad. Cienc. Exactas Fis. Nat. Ser. A-Mat.}
\textbf{117} (2023), Article 129,
\newblock \href{https://doi.org/10.1007/s13398-023-01460-4}
{doi:10.1007/s13398-023-01460-4}.

\bibitem{BellaSpadaroCommon}
A.~Bella and S.~Spadaro,
\newblock A common extension of Arhangel'ski\u{\i}'s theorem and the
Hajnal--Juh\'asz inequality,
\newblock \emph{Canad. Math. Bull.} \textbf{63} (2020), 197--203,
\newblock \href{https://doi.org/10.4153/S0008439519000420}
{doi:10.4153/S0008439519000420}.

\bibitem{BellaSpadaroSDL}
A.~Bella and S.~Spadaro,
\newblock Strongly discrete subsets with Lindelöf closures,
\newblock \emph{Topology Proceedings} \textbf{59} (2022), 89--98.

\bibitem{Dow}
A.~Dow,
\newblock An introduction to applications of elementary submodels to
topology,
\newblock \emph{Topology Proceedings} \textbf{13} (1988), 17--72.

\bibitem{Engelking}
R.~Engelking,
\newblock \emph{General Topology},
\newblock revised and completed ed., Heldermann Verlag, Berlin, 1989.

\bibitem{Hodel}
R.~E. Hodel,
\newblock Combinatorial set theory and cardinal function inequalities,
\newblock \emph{Proceedings of the American Mathematical Society}
\textbf{111} (1991), 567--575.

\bibitem{Juhasz}
I.~Juhász,
\newblock \emph{Cardinal Functions in Topology---Ten Years Later},
\newblock Mathematical Centre Tracts, vol.~123, Mathematisch Centrum,
Amsterdam, 1980.

\bibitem{Kunen}
K.~Kunen,
\newblock \emph{Set Theory},
\newblock Studies in Logic, vol.~34, College Publications, London,
2011.

\end{thebibliography}
\end{document}